\documentclass[a4paper,12pt]{amsart}


\usepackage[english]{babel}
\usepackage{amsthm}
\usepackage{amssymb}
\usepackage{hyperref}

\addtolength{\headheight}{1.14998pt}

\addtolength{\oddsidemargin}{-.5cm}
\addtolength{\evensidemargin}{-1.5cm}
\addtolength{\textwidth}{2cm}

\newcommand{\Set}[1]{\left\{\, #1 \,\right\}}

\newcommand{\Span}[1]{\langle\, #1 \,\rangle}

\newcommand{\Order}[1]{\lvert #1 \rvert}
\newcommand{\Index}[2]{\lvert #1 : #2 \rvert}
\newcommand{\Size}[1]{\lvert #1 \rvert}

\DeclareMathOperator{\End}{End}
\DeclareMathOperator{\Frat}{Frat}
\DeclareMathOperator{\Hom}{Hom}

\newcommand{\fpf}[0]{fixed-point-free}

\renewcommand{\phi}[0]{\varphi}
\renewcommand{\theta}[0]{\vartheta}
\renewcommand{\epsilon}[0]{\varepsilon}

\newtheorem{dummy}{Dummy}
\numberwithin{dummy}{section}
\numberwithin{equation}{section}

\newtheorem{theorem}[dummy]{Theorem}
\newtheorem{lemma}[dummy]{Lemma}
\newtheorem{prop}[dummy]{Proposition}

\theoremstyle{definition}

\newtheorem{definition}[dummy]{Definition}

\newcommand{\FMEO}[1]{}

\begin{document}

\bibliographystyle{amsalpha}

\date{14 December 2012, 11:29 CET --- Version 5.01%
}

\title[Quasi-inverse endomorphisms]
{Quasi-inverse endomorphisms}

\author{A.~Caranti}

\address[A.~Caranti]%
 {Dipartimento di Matematica\\
  Universit\`a degli Studi di Trento\\
  via Sommarive 14\\
  I-38123 Trento\\
  Italy} 

\email{andrea.caranti@unitn.it} 

\urladdr{http://science.unitn.it/$\sim$caranti/}

\begin{abstract}
Greither   and    Pareigis have established a
connection between Hopf Galois structures on a Galois extension $L/K$
with Galois group $G$, and the regular subgroups of the group of
permutations on $G$, which are normalized by $G$.
Byott has rephrased this connection in terms of certain
equivalence classes of
injective morphisms of $G$ into the holomorphs of the groups $N$ with the same
cardinality of $G$. 

Childs and Corradino have used this theory to
construct such Hopf Galois structures, starting from \fpf\ endomorphisms
of $G$ that have abelian images.
In this paper  we show that a \fpf\ endomorphism  has an abelian image
if and only if there is  another endomorphism that is its inverse with
respect to the  circle operation in the near-ring of  maps on $G$, and
give  a   fairly  explicit   recipe  for  constructing   all  such
endomorphisms.
\end{abstract}

\keywords{Hopf Galois structures, finite groups, \fpf\ endomorphisms,
quasi-inverse, near-ring} 

  
\subjclass[2010]{20D45 16N20 16Y30}

\maketitle

\thispagestyle{empty}

\section{Introduction}

Our starting point is  the paper~\cite{Childs}, in which Childs begins
by  reviewing  the  theory  of  Greither  and  Pareigis~\cite{GP}. This  theory establishes a bijection between
Hopf  Galois structures  on a  Galois extension  $L/K$ of  fields with
Galois  group  $G$,  and  the   regular  subgroups  of  the  group  of
permutations  on $G$,  which are  normalized  by $G$.
Byott~\cite{By96a, By96b} has shown that the problem  of
determining these  regular subgroups can  be  translated into that
of finding certain equivalence classes  of injective morphisms of $G$ into
the  holomorphs  of  the  groups  $N$  with  the  same  cardinality  of
$G$. Childs and Corradino~\cite{ChiCor} and Childs~\cite{Chi07} have showed
that abelian \fpf\ endomorphisms of  $G$ yield directly, via the above
theory, such  Hopf Galois  structures. (Here we  say with~\cite{Childs}
that an  endomorphism is  \emph{abelian} if its  image is  abelian.) We
refer to~\cite{Childs}~and \cite{Chi00} for the details.

Childs studies  in particular those Hopf Galois  structures that arise
from  the abelian  \fpf\  endomorphism  $\phi$ of  $G$  that admit  an
inverse  endomorphism   $\psi$  with  respect   to  the  circle
operation $\phi  \circ \psi = \psi  - \phi \psi +  \phi$. (Childs
calls $\psi$ the \emph{inverse} of $\phi$. Since he only 
considers \fpf\ endomorphisms that  are not automorphisms, the term
is unambiguous in  his context. In this paper we will
have to deal also with \fpf\ automorphisms, and 
with inverses with respect to map composition. We have thus preferred
to use the classical term  \emph{quasi-inverse} for  $\psi$ \cite[Chap.~1,
  Section.~5]{SoR}.)

In this paper we study the  \fpf\ endomorphisms of a finite group that
have a quasi-inverse. 
We will show that for a \fpf\
endomorphism of a finite group the properties of being abelian, and
that of
having a quasi-inverse, are equivalent (Theorem~\ref{thm:recab},
which extends \cite[Remark 
  10]{Childs}, and shows that the condition of having a quasi-inverse is not
restrictive in the context of~\cite{Childs}). 
In  Sections~\ref{sec:Fitting}~and \ref{sec:piecingtogether} we are
able to  give reasonably 
explicit  recipes  for  constructing  the  groups that  have  such  an
endomorphism, and for determining all of their endomorphisms with this
property.    Our  main  tool  is  a version  for  groups  of
Fitting's  Lemma  for  modules (Section~\ref{sec:Fitting}). We are
then led to
study
\fpf\ endomorphisms of finite abelian groups (Section~\ref{sec:ab}),
abelian nilpotent endomorphisms (Section~\ref{sec:nilp}),
and how to put the two together (Section~\ref{sec:piecingtogether}).
In Section~\ref{sec:ex} we  will
be  reviewing  the  examples 
of~\cite{Childs} from our point of view, and provide some more.

\section{Preliminaries}

Let $G$ be a group, multiplicatively written. 
In our context, it is convenient to write maps on $G$ as
exponents. Given two maps $\phi, \psi$ on $G$, 
we define as usual their pointwise sum $\phi + \psi$ by $x^{\phi +
  \psi} = x^{\phi} x^{\psi}$, for $x \in G$ and their product  $\phi
\psi$ as their
composition, $x^{\phi \psi} = (x^{\phi})^{\psi}$. It is well known
that these operations define a structure of a near-ring on the set
$N(G)$ of
maps on $G$. (See~\cite{Pilz}.) The sum  is not commutative in
general, and only the 
distributive relation $\phi (\psi + \theta) = 
\phi \psi + \phi \theta$ holds generally, while if $\phi$ is an
endomorphism of $G$, we also have $(\psi + \theta) \phi =
\psi \phi + \theta \phi$. Note also that for a map $\phi$ on $G$, $-
\phi$ is defined by $\phi + (-\phi) = 0$ (where $x^{0} = 1$ for all
$x \in G$), so that 
$x^{-\phi} = (x^{\phi})^{-1}$ for $x \in G$. Thus $- (\phi +
\psi) = - \psi - \phi$ for maps $\phi, \psi$ on $G$.

If $\phi, \psi$ are endomorphisms of $G$, their sum $\phi + \psi$ is
not in general an endomorphism of $G$. 
We have
\begin{prop}\label{prop:commute}
  Let $G$ be a group, $\phi, \psi \in \End(G)$. The following are
  equivalent
  \begin{enumerate}
  \item $\phi + \psi \in \End(G)$, and
  \item $[G^{\phi}, G^{\psi}] = 1$, that is, the images of $\phi$ and
    $\psi$ commute elementwise.
  \end{enumerate}
  Moreover, these conditions imply $\phi + \psi = \psi + \phi$.
\end{prop}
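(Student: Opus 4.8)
The plan is to compute $(xy)^{\phi+\psi}$ in two ways and simply read off the condition. For $x, y \in G$ I would first expand, using only that $\phi$ and $\psi$ are themselves endomorphisms,
\[
(xy)^{\phi+\psi} = (xy)^{\phi}(xy)^{\psi} = x^{\phi}y^{\phi}x^{\psi}y^{\psi},
\]
and compare this with
\[
x^{\phi+\psi}\,y^{\phi+\psi} = x^{\phi}x^{\psi}y^{\phi}y^{\psi}.
\]
Hence $\phi+\psi \in \End(G)$ holds if and only if $x^{\phi}y^{\phi}x^{\psi}y^{\psi} = x^{\phi}x^{\psi}y^{\phi}y^{\psi}$ for all $x, y \in G$; cancelling $x^{\phi}$ on the left and $y^{\psi}$ on the right — a step which is plainly reversible — this is equivalent to $y^{\phi}x^{\psi} = x^{\psi}y^{\phi}$ for all $x, y \in G$. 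Since $G^{\phi} = \Set{y^{\phi} : y \in G}$ and $G^{\psi} = \Set{x^{\psi} : x \in G}$, this last identity says exactly that $G^{\phi}$ and $G^{\psi}$ commute elementwise, that is, $[G^{\phi}, G^{\psi}] = 1$. This gives the equivalence of (1) and (2).

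For the final assertion I would argue pointwise: assuming the images commute, for each $x \in G$ the elements $x^{\phi} \in G^{\phi}$ and $x^{\psi} \in G^{\psi}$ commute, so
\[
x^{\phi+\psi} = x^{\phi}x^{\psi} = x^{\psi}x^{\phi} = x^{\psi+\phi},
\]
and therefore $\phi+\psi = \psi+\phi$.

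I do not expect any genuine difficulty here: the whole proof is a short manipulation inside $G$, together with bookkeeping in the near-ring operations defined in the Preliminaries. The only point worth watching is that the cancellation linking the endomorphism identity to the commutator identity runs in both directions, so that condition (2) is not merely necessary but also sufficient for (1).
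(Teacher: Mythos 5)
Your proof is correct and follows essentially the same route as the paper: expand $(xy)^{\phi+\psi}$ both ways, cancel to reduce to $y^{\phi}x^{\psi}=x^{\psi}y^{\phi}$, and then specialize $y=x$ (equivalently, your pointwise argument) to get $\phi+\psi=\psi+\phi$. No gaps.
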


\begin{proof}
$
  (a b)^{\phi + \psi} 
  = 
  (a b)^{\phi} (a b)^{\psi}
  =
  a^{\phi} b^{\phi} a^{\psi} b^{\psi}
$
equals
$
  a^{\phi + \psi} b^{\phi + \psi}
  =
  a^{\phi} a^{\psi} b^{\phi} b^{\psi}
$
for all $a, b \in G$ if and only if $b^{\phi} a^{\psi} =
a^{\psi} b^{\phi}$ for all $a, b \in G$. Setting $b = a$ in the last
identity we obtain $\phi + \psi = \psi + \phi$.
\end{proof}

\begin{definition}
  Let $\phi$ be an endomorphism of the group $G$. A \emph{fixed point}
  of $\phi$ is an element $x \in G$ such that $x^{\phi} = x$. We say
  that $\phi$ is \emph{\fpf} if its only fixed point is $1$.
\end{definition}

The following is well-known \cite[Lemma~1.1., Chap.~10]{Gore}.
\begin{prop}\label{prpo:fpfbi}
  Let $G$ be a finite group, $\phi \in \End(G)$.
  The following are equivalent:
  \begin{enumerate}
  \item the map $1 - \phi$ is injective, and thus a bijection on $G$,
    and
  \item $\phi$ is \fpf.
  \end{enumerate}
\end{prop}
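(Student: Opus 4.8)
The plan is to unwind the near-ring notation and reduce the statement to a one-line cancellation. First I would record what the map $1-\phi$ actually does: by the definition of the pointwise sum and of $-\phi$ given in the Preliminaries, $x^{1-\phi} = x^{1}\cdot x^{-\phi} = x\,(x^{\phi})^{-1}$ for every $x\in G$. In particular $1^{1-\phi}=1$. I would stress at this point that $1-\phi$ is \emph{not} an endomorphism in general (the $\phi$-image need not be central), so injectivity cannot be detected by a kernel and has to be checked directly.

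For the implication (b)$\Rightarrow$(a), I would take $x,y\in G$ with $x^{1-\phi}=y^{1-\phi}$, i.e. $x\,(x^{\phi})^{-1}=y\,(y^{\phi})^{-1}$, and rearrange this (left-multiply by $y^{-1}$, right-multiply by $x^{\phi}$) to $y^{-1}x=(y^{\phi})^{-1}x^{\phi}$. Since $\phi\in\End(G)$, the right-hand side is $(y^{-1}x)^{\phi}$, so $y^{-1}x$ is a fixed point of $\phi$; as $\phi$ is \fpf, this forces $y^{-1}x=1$, hence $x=y$. Thus $1-\phi$ is injective, and because $G$ is finite an injective self-map is automatically a bijection, which gives the ``and thus a bijection'' clause of (a).

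Conversely, for (a)$\Rightarrow$(b) I would argue by contraposition: if $\phi$ has a fixed point $x\neq 1$, then $x^{1-\phi}=x\,(x^{\phi})^{-1}=1=1^{1-\phi}$, contradicting the injectivity of $1-\phi$. Hence $\phi$ is \fpf. The whole argument is elementary; there is no real obstacle, the only point needing a little care being the remark in the first paragraph that one must verify injectivity by the explicit cancellation rather than via a ``kernel'', and noting that finiteness of $G$ is used solely to promote ``injective'' to ``bijective''.
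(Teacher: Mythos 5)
Your proposal is correct and follows essentially the same route as the paper: the paper's one-line proof is exactly the observation that $x^{1-\phi}=y^{1-\phi}$ holds if and only if $y^{-1}x=(y^{-1}x)^{\phi}$, i.e.\ if and only if $y^{-1}x$ is a fixed point of $\phi$, which you have merely unpacked into the two separate implications. The extra remarks about $1-\phi$ not being an endomorphism and finiteness only being needed to pass from injective to bijective are accurate but do not change the argument.
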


\begin{proof}
  $x^{1 - \phi} = x x^{-\phi} = y y^{-\phi} = y^{1 - \psi}$ if and
  only if
  $y^{-1} x = (y^{-1} x)^{\phi}$, that is, $y^{-1} x$ is a fixed point
  of $\phi$.
\end{proof}

\section{Quasi-inverse endomorphisms}
\label{sec:rec}

Childs studies in~\cite{Childs} the \fpf\ endomorphisms $\phi$ such that $1
- \phi$ has an inverse of the same form $1 - \psi$, for some
endomorphism $\psi$ of $G$. 
\begin{definition}\label{def:inverse}
  Let $G$ be  a finite group. A \fpf\  endomorphisms $\phi$ of
  $G$ is said to be \emph{quasi-invertible} if there is an
  endomorphism $\psi$ of $G$ such that the map $1  - \psi$ is the
  inverse of the map $1 - \phi$ with respect to map
  composition. $\psi$ is said to be the \emph{quasi-inverse} of
  $\phi$. 
\end{definition}
Clearly $\psi$ is also
\fpf\ by Proposition~\ref{prpo:fpfbi}, and $\psi$ is uniquely
determined by $\phi$. 
A familiar argument shows that if $\psi$ is the quasi-inverse of
$\phi$, then 
$1 = (1 - \phi)(1 - \psi) = 1 - \phi + \phi \psi - \psi$,
so that $-\phi + \phi \psi - \psi = 0$, or
\begin{equation}\label{eq:main}
  \phi + \psi = \phi \psi.
\end{equation}
In other words,  $\psi$ is the inverse of $\phi$ with
respect to the 
circle operation  $\phi \circ \psi = \psi  - \phi \psi +  \phi$ on the
near-ring $N(G)$.   Thus $\phi$  is \emph{right quasi-regular}  in the
classical radical theory  of Jacobson \cite[Chap.~1, Section.~5]{SoR},
the catch here being that  we require the  (right) \emph{quasi-inverse}
$\psi$ of $\phi$ to be another endomorphism of $G$.

Since $\phi + \psi = \phi \psi \in \End(G)$, we have by
Proposition~\ref{prop:commute} $\phi + \psi = \psi + \phi$, and thus
$\phi \psi = \psi \phi$. Also, $[G^{\phi}, G^{\psi}] = 1$. Now we
can rewrite~\eqref{eq:main} as
\begin{equation*}
  \phi = - \psi + \phi \psi = (-1 + \phi) \psi,
\end{equation*}
as $\psi \in \End(G)$. This shows that $G^{\phi} \subseteq G^{\psi}$
and then by symmetry (or because $-1 + \phi$ is a bijection) $G^{\phi}
=  G^{\psi}$. Also, if $x \in \ker(\phi)$, then $1 = x^{\phi} = x^{(-1 +
\phi) \psi} = (x^{-1})^{\psi}$, so that $x \in \ker(\psi)$, and thus
$\ker(\phi) = \ker(\psi)$. We have obtained 
\begin{prop}\label{prop:abimage}
  Let $G$ be a finite group, $\phi, \psi$ two \fpf\ endomorphisms of
  $G$ that are one the quasi-inverse of the other. Then 
  \begin{enumerate}
  \item $\phi + \psi = \psi + \phi$ and $\phi \psi = \psi \phi$,
  \item   $G^{\phi} = G^{\psi}$ is an abelian
    subgroup of $G$, and 
  \item $\ker(\phi) = \ker(\psi)$.
  \end{enumerate}
\end{prop}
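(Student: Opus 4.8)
The plan is to derive all three statements from the single near-ring identity $\phi + \psi = \phi\psi$ of~\eqref{eq:main} (which is just the expansion of $1 = (1-\phi)(1-\psi)$), together with Proposition~\ref{prop:commute}, while exploiting the symmetry between $\phi$ and $\psi$: since $1-\phi$ and $1-\psi$ are mutually inverse bijections of the finite set $G$, we also have $(1-\psi)(1-\phi)=1$, so anything proved for the ordered pair $(\phi,\psi)$ holds for $(\psi,\phi)$ as well.

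First I would note that $\phi + \psi = \phi\psi$ lies in $\End(G)$, so Proposition~\ref{prop:commute} applies to the pair $\phi,\psi$ and yields at once $\phi + \psi = \psi + \phi$ and $[G^{\phi},G^{\psi}] = 1$. Applying the same to $(\psi,\phi)$ gives $\psi + \phi = \psi\phi$; comparing the two identities, $\phi\psi = \phi + \psi = \psi + \phi = \psi\phi$, which is part~(a).

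For part~(b) the idea is to factor $\phi$ through $\psi$. Rearranging~\eqref{eq:main} — legitimate once $\phi+\psi=\psi+\phi$ is known — gives $\phi = -\psi + \phi\psi$, and then, using that $\psi$ is an endomorphism so that the right distributive law $(\alpha+\beta)\psi = \alpha\psi + \beta\psi$ is available, $\phi = (-1+\phi)\psi$. Since $\phi$ is \fpf, the map $-1+\phi$, namely $x \mapsto x^{-1} x^{\phi}$, is injective: if $x^{-1}x^{\phi} = y^{-1}y^{\phi}$ then, $\phi$ being a homomorphism, $(yx^{-1})^{\phi} = y^{\phi}x^{-\phi} = yx^{-1}$, forcing $yx^{-1}=1$; hence $-1+\phi$ is a bijection of the finite group $G$, and therefore $G^{\phi} = (G^{-1+\phi})^{\psi} = G^{\psi}$. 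Now $[G^{\phi},G^{\psi}]=1$ from the previous step becomes $[G^{\phi},G^{\phi}]=1$, so the image $G^{\phi}$ — which is a subgroup of $G$, being the image of an endomorphism — is abelian.

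Finally, for part~(c) I would feed a kernel element into the factorization $\phi = (-1+\phi)\psi$: if $x \in \ker(\phi)$ then $1 = x^{\phi} = \bigl(x^{-1}x^{\phi}\bigr)^{\psi} = (x^{-1})^{\psi}$, so $x^{-1}$, hence $x$, lies in $\ker(\psi)$; thus $\ker(\phi) \subseteq \ker(\psi)$, and the symmetric argument gives equality. The only point requiring care throughout is the bookkeeping in the near-ring $N(G)$: the pointwise sum is non-commutative and only the left distributive law holds unconditionally, so each rearrangement must be justified against the fact that $\phi$ and $\psi$ are genuine endomorphisms and that the inverse is two-sided. Once that discipline is maintained, everything reduces to the short manipulations above, and I do not expect any real obstacle beyond it.
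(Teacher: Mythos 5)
Your proposal is correct and follows essentially the same route as the paper: derive $\phi + \psi = \phi\psi$, invoke Proposition~\ref{prop:commute} to get commutativity and $[G^{\phi},G^{\psi}]=1$, factor $\phi = (-1+\phi)\psi$ using right distributivity over the endomorphism $\psi$, and exploit that $-1+\phi$ is a bijection to conclude $G^{\phi}=G^{\psi}$ and $\ker(\phi)=\ker(\psi)$. The only difference is that you make explicit the symmetry step ($(1-\psi)(1-\phi)=1$, hence $\psi+\phi=\psi\phi$) that the paper leaves implicit in its ``and thus $\phi\psi=\psi\phi$''.
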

We take, as in \cite[Definition~1]{Childs}, the following
\begin{definition}
  An endomorphism $\phi$ of the group $G$ is said to be \emph{abelian}
  is its image $G^{\phi}$ is abelian, or equivalently $G' \le \ker(\phi)$.
\end{definition}
Thus all quasi-invertible \fpf\ endomorphisms $\phi$ of the
finite group $G$  are abelian.
We have in fact 
\begin{theorem}\label{thm:recab}
  Let $G$ be a finite group, $\phi$ a \fpf\ endomorphism of $G$. The
  following are equivalent:
  \begin{enumerate}
  \item $\phi$ is quasi-invertible, and
  \item $\phi$ is abelian.
  \end{enumerate}
\end{theorem}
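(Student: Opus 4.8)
The plan is to prove the non-trivial direction $(b)\Rightarrow(a)$, since $(a)\Rightarrow(b)$ has already been established in Proposition~\ref{prop:abimage} and the discussion following it. So assume $\phi$ is a \fpf\ endomorphism with $G^{\phi}$ abelian; I must manufacture an endomorphism $\psi$ with $\phi + \psi = \phi\psi$, equivalently $(1-\phi)(1-\psi)=1$. The key observation is that on the abelian subgroup $A = G^{\phi}$, the map $\phi$ restricts to an endomorphism $\phi|_{A}$ of the abelian group $A$, and crucially $\phi|_A$ is still \fpf\ (any fixed point of $\phi$ lies in $A$ automatically and must be $1$). Hence by Proposition~\ref{prpo:fpfbi} applied to $A$, the map $(1-\phi)|_A$ is a bijection of $A$, so $1-\phi|_A$ is an invertible element of the near-ring $N(A)$; but more is true, because on an abelian group the set of endomorphisms forms an honest ring $\End(A)$, and $1 - \phi|_A \in \End(A)$ is a bijective ring endomorphism of a finite abelian group, hence a \emph{unit} in $\End(A)$. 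Let $\beta \in \End(A)$ be its inverse, so $(1-\phi|_A)\beta = \beta(1-\phi|_A) = 1_A$, and set $\psi_0 = 1_A - \beta \in \End(A)$; unwinding gives $(1-\phi|_A)(1-\psi_0)=1_A$, i.e.\ $\phi|_A + \psi_0 = \phi|_A \psi_0$ inside $\End(A)$.

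The second step is to promote $\psi_0 \in \End(A)$ to an endomorphism $\psi$ of all of $G$ with the required property. The natural candidate is $\psi = \iota \circ \psi_0 \circ \phi'$ where I first apply $\phi$ (landing in $A$), then $\psi_0$, then the inclusion --- but I should be slightly more careful about which ``copy'' of $\phi$ to precompose with. In the language of the paper: note that since $A=G^{\phi}$ is abelian, $\phi\in\End(G)$ factors as $G \to A \hookrightarrow G$, and I will define $\psi$ so that $x^{\psi} = (x^{\phi})^{\psi_0}$ essentially, up to composing with $\phi$ once more if needed so that $\psi$ and $\phi$ interact correctly. The point is that $\psi$ so defined is a composite of endomorphisms, hence an endomorphism of $G$, its image lies in $A = G^{\phi}$, and it is \fpf. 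Then one computes, for all $x\in G$, using $\phi^2 = \phi\cdot\phi$ and that $\phi$ acts on $A$ as $\phi|_A$:
\begin{equation*}
  x^{\phi+\psi} = x^{\phi}\,(x^{\phi})^{\psi_0} \quad\text{and}\quad x^{\phi\psi} = (x^{\phi})^{\psi} = \bigl((x^{\phi})^{\phi}\bigr)^{\psi_0},
\end{equation*}
and the identity $\phi|_A + \psi_0 = \phi|_A\psi_0$ on $A$, applied to the element $x^{\phi}\in A$, yields exactly $x^{\phi+\psi}=x^{\phi\psi}$. (I will double-check the bookkeeping: one wants $\psi = \phi\psi_0^{\,\flat}$ where $\psi_0^{\,\flat}$ extends $\psi_0$ by any endomorphism trivial off $A$, or one simply works with $1-\phi$ and its inverse directly on $G$, verifying that the inverse of the bijection $1-\phi$ happens to have the form $1-\psi$ with $\psi$ an endomorphism precisely because of the factorization through the abelian $A$.)

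The main obstacle, and the step deserving the most care, is exactly this promotion: verifying that the inverse of $1-\phi$ as a \emph{permutation} of $G$ genuinely has the form $1-\psi$ with $\psi$ an \emph{endomorphism}, not merely an arbitrary map. Abstractly, $1-\phi$ is always a bijection of $G$ by Proposition~\ref{prpo:fpfbi}, so a set-theoretic inverse exists unconditionally; the abelian hypothesis on $G^{\phi}$ is what forces that inverse to be ``endomorphic.'' The cleanest route is the algebraic one above: reduce to the ring $\End(A)$, invert there, and pull back along $\phi$; the only thing to watch is that $\phi$ restricted to $A$ need not be surjective onto $A$, so I cannot literally say ``$\psi_0$ on $A$ determines $\psi$ on $G$ by $\psi = $ (extension)'' without specifying the extension --- but composing on the left with $\phi$ (whose image \emph{is} all of $A$) sidesteps this, at the cost of one extra application of $\phi$, which the algebra then absorbs. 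Finally, I should remark that $\psi$ is automatically \fpf\ (Proposition~\ref{prpo:fpfbi} again, or Proposition~\ref{prop:abimage}) and abelian, so the construction is self-consistent and, together with the earlier discussion, gives the full equivalence.
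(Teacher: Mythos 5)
Your overall route is genuinely different from the paper's: the paper decomposes $G$ via Fitting's Lemma (Theorem~\ref{thm:Fitting}) as a semidirect product of $K=\ker(\phi^{n})$ by $H=G^{\phi^{n}}$, treats the nilpotent and automorphism parts separately, and glues them with Theorem~\ref{thm:glue}; you instead work directly in the honest ring $\End(A)$ for $A=G^{\phi}$ and invert $1-\phi|_{A}$ there. That idea is sound and would give a shorter proof, but the promotion step as you have written it is wrong, and your displayed verification does not close. With your candidate $\psi=\phi\psi_{0}$ (first $\phi$, then $\psi_{0}$) and $a=x^{\phi}$, the equality $x^{\phi+\psi}=x^{\phi\psi}$ requires $a\,a^{\psi_{0}}=(a^{\phi})^{\psi_{0}}$, whereas the identity $\phi|_{A}+\psi_{0}=\phi|_{A}\psi_{0}$ applied to $a$ gives $a^{\phi}\,a^{\psi_{0}}=(a^{\phi})^{\psi_{0}}$. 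The two differ by $a$ versus $a^{\phi}$, so what your candidate actually satisfies is $\phi^{2}+\psi=\phi\psi$, not $\phi+\psi=\phi\psi$; the hedge ``up to composing with $\phi$ once more'' points in the wrong direction, since the problem is one application of $\phi$ too many, not too few.

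The correct bookkeeping: any quasi-inverse has the form $\psi=\phi\gamma$ with $\gamma\colon A\to A$ (since $G^{\psi}=G^{\phi}$ and $\ker\psi=\ker\phi$ by Proposition~\ref{prop:abimage}), and the condition $\phi+\phi\gamma=\phi(\phi\gamma)$ reads $1_{A}+\gamma=\phi|_{A}\gamma$ in $\End(A)$, i.e.\ $(\phi|_{A}-1)\gamma=1_{A}$, so $\gamma=-\beta$ with $\beta=(1-\phi|_{A})^{-1}\in\Aut(A)$. Thus set $x^{\psi}=\bigl((x^{\phi})^{\beta}\bigr)^{-1}$: this is a composite of homomorphisms $G\to A\to A\to A$ (inversion on $A$ is a homomorphism because $A$ is abelian), hence an endomorphism of $G$, and for $a=x^{\phi}$ one has $x^{\phi+\psi}=a\,(a^{\beta})^{-1}$ and $x^{\phi\psi}=(a^{\phi\beta})^{-1}$, which agree exactly because $a^{(1-\phi)\beta}=a$. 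Equivalently, the true $\psi$ restricts to your $\psi_{0}=1-\beta$ on $A$ (indeed $-\phi\beta=(1-\phi)\beta-\beta=1-\beta$ there), but globally it is $-\phi\beta$, not $\phi\psi_{0}$; in fact $\phi\psi_{0}=\phi\psi$. With this correction your argument is complete and arguably more economical than the paper's, though it does not yield the structural recipes that the Fitting decomposition provides.
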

We defer the proof to Section~\ref{sec:piecingtogether}.


\section{Fitting's Lemma}
\label{sec:Fitting}

The following is a standard result in the theory of modules
\cite[p.~113]{BAII}. 
\begin{theorem}[Fitting's Lemma]
  Let $A$ be a ring, and $M$ an $A$-module that is both artinian and
  noetherian. 

  Let $\phi$ be an endomorphism of $M$. 
  Then there is a a natural
  number $n$ such that $\ker(\phi^{n}) = \ker(\phi^{n+i})$ and $M^{\phi^{n}}
  = M^{\phi^{n+i}}$ for each $i \ge 0$. We have
  \begin{equation*}
    M = \ker(\phi^{n}) \oplus G^{\phi^{n}}.
  \end{equation*}
  Moreover, the restriction of $\phi$ to $\ker(\phi^{n})$ is
  nilpotent, and the restriction of $\phi$ to $G^{\phi^{n}}$ is an
  automorphism.
\end{theorem}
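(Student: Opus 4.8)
The plan is to use the two finiteness conditions separately: noetherianness to stabilize the chain of kernels, and artinianness to stabilize the chain of images. First I would note that the ascending chain $\ker(\phi) \subseteq \ker(\phi^{2}) \subseteq \cdots$ of submodules of $M$ stabilizes, since $M$ is noetherian, so there is $n_{1}$ with $\ker(\phi^{n_{1}}) = \ker(\phi^{n_{1}+i})$ for all $i \ge 0$; dually, the descending chain $M^{\phi} \supseteq M^{\phi^{2}} \supseteq \cdots$ of images stabilizes, since $M$ is artinian, so there is $n_{2}$ with $M^{\phi^{n_{2}}} = M^{\phi^{n_{2}+i}}$ for all $i \ge 0$. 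Taking $n = \max\{n_{1}, n_{2}\}$ yields a single exponent at which both chains are constant; in particular $\ker(\phi^{2n}) = \ker(\phi^{n})$ and $M^{\phi^{2n}} = M^{\phi^{n}}$, which are the facts I will actually use.

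Next I would prove the direct decomposition $M = \ker(\phi^{n}) \oplus M^{\phi^{n}}$ by checking that the two submodules intersect trivially and span $M$. For the intersection: if $x \in \ker(\phi^{n}) \cap M^{\phi^{n}}$, write $x = y^{\phi^{n}}$; then $y^{\phi^{2n}} = x^{\phi^{n}} = 0$ forces $y \in \ker(\phi^{2n}) = \ker(\phi^{n})$, hence $x = y^{\phi^{n}} = 0$. For the span: given $x \in M$, we have $x^{\phi^{n}} \in M^{\phi^{n}} = M^{\phi^{2n}}$, so $x^{\phi^{n}} = y^{\phi^{2n}}$ for some $y \in M$; then $x - y^{\phi^{n}} \in \ker(\phi^{n})$ while $y^{\phi^{n}} \in M^{\phi^{n}}$, and $x = (x - y^{\phi^{n}}) + y^{\phi^{n}}$.

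Finally, the restriction of $\phi$ to $\ker(\phi^{n})$ is nilpotent by construction, since $\phi^{n}$ annihilates it. The restriction of $\phi$ to $M^{\phi^{n}}$ maps onto $M^{\phi^{n}}$, because $\phi(M^{\phi^{n}}) = M^{\phi^{n+1}} = M^{\phi^{n}}$, and it is injective there, because any $x \in M^{\phi^{n}}$ with $x^{\phi} = 0$ lies in $\ker(\phi) \cap M^{\phi^{n}} \subseteq \ker(\phi^{n}) \cap M^{\phi^{n}} = 0$; hence this restriction is an automorphism of $M^{\phi^{n}}$.

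I do not expect a genuine obstacle in the module case: this is the classical argument, and the only point needing care is keeping the two hypotheses in their roles — noetherian for kernels, artinian for images — and invoking stabilization at the doubled exponent $2n$, not merely at $n$, in the intersection and span steps. The real work in this section will instead be the group-theoretic analogue the paper is aiming at, where $\ker(\phi^{n})$ and $M^{\phi^{n}}$ become normal, $\phi$-invariant subgroups and one must additionally verify that they centralize each other, so that the internal product is a direct product; this is where Proposition~\ref{prop:commute} and the hypotheses on $\phi$ are expected to enter.
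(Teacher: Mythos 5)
Your proof is correct and is the standard argument; the paper in fact states this module version without proof (citing Jacobson, \emph{Basic Algebra II}), but your argument coincides step for step with the proof the paper does give for the group analogue (Theorem~\ref{thm:Fitting}): stabilization of both chains, the intersection and span computations using the doubled exponent $2n$, and the nilpotent/automorphism conclusions. One small correction to your closing remark: in the group version the paper only establishes a \emph{semidirect} product of the normal subgroup $\ker(\phi^{n})$ by $G^{\phi^{n}}$, so no centralizing of the two factors is verified or needed, and Proposition~\ref{prop:commute} plays no role in that proof.
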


The following version for groups is folklore.
\begin{theorem}[Fitting's Lemma for groups]
  \label{thm:Fitting}
  Let $G$ be a group  which satisfies the ascending chain condition on
  normal subgroups, and the descending chain condition on subgroups.

  Let $\phi$ be an endomorphism of $G$.   Then there is
  a natural 
  number $n$ such that $\ker(\phi^{n}) = \ker(\phi^{n+i})$ and $G^{\phi^{n}}
  = G^{\phi^{n+i}}$ for each $i \ge 0$.
  We have:
  \begin{enumerate}
  \item $G$ is
    the semidirect product of the normal subgroup $K = \ker(\phi^{n})$ by
    the subgroup $H = G^{\phi^{n}}$,
  \item the restriction of  $\phi$ to $K$
    is nilpotent, and 
  \item the restriction of  $\phi$ to  $H$  is an
    automorphism.
  \end{enumerate}
\end{theorem}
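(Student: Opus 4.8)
The plan is to mimic the classical proof of Fitting's Lemma, but paying attention to the fact that we are working with groups rather than modules, so that kernels are normal but images need not be, and sums of endomorphisms are not available.

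First I would establish the stabilisation of the two chains. The subgroups $\ker(\phi) \le \ker(\phi^{2}) \le \dots$ are all normal in $G$, so the ascending chain condition on normal subgroups gives an $n_{1}$ with $\ker(\phi^{n_{1}}) = \ker(\phi^{n_{1}+i})$ for all $i \ge 0$. Dually, the images $G \ge G^{\phi} \ge G^{\phi^{2}} \ge \dots$ form a descending chain of (not necessarily normal) subgroups, so the descending chain condition on subgroups gives an $n_{2}$ with $G^{\phi^{n_{2}}} = G^{\phi^{n_{2}+i}}$ for all $i \ge 0$. Taking $n = \max(n_{1}, n_{2})$ and writing $\psi = \phi^{n}$, $K = \ker(\psi)$, $H = G^{\psi}$, we have $\ker(\psi) = \ker(\psi^{2})$ and $G^{\psi} = G^{\psi^{2}}$, and it suffices to prove the three conclusions for $\psi$ in place of $\phi$ (a power of $\phi$ is nilpotent on $K$ iff $\phi$ is, and an automorphism of $H$ iff $\phi$ is, using that $\phi$ maps $H$ into $H$).

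Next I would prove $K \cap H = 1$ and $KH = G$. For the intersection: if $x \in K \cap H$, write $x = y^{\psi}$; then $y^{\psi^{2}} = x^{\psi} = 1$, so $y \in \ker(\psi^{2}) = \ker(\psi)$, whence $x = y^{\psi} = 1$. For the product: given $g \in G$, since $g^{\psi} \in G^{\psi} = G^{\psi^{2}}$ we may write $g^{\psi} = h^{\psi^{2}}$ for some $h \in G$, so that $(g (h^{\psi})^{-1})^{\psi} = g^{\psi} (h^{\psi})^{-\psi}\cdot(\text{careful})$ — here I must be cautious, since $\psi$ is only multiplicative: $(g\,(h^{\psi})^{-1})^{\psi} = g^{\psi}\,((h^{\psi})^{-1})^{\psi} = g^{\psi}\,(h^{\psi^{2}})^{-1} = 1$, so $g (h^{\psi})^{-1} \in K$ and $g = (g(h^{\psi})^{-1})\,h^{\psi} \in K H$. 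Since $K \trianglelefteq G$, it follows that $G = K \rtimes H$, proving (a).

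Finally I would verify (b) and (c). The restriction $\psi|_{H}$ is an automorphism of $H$: it is surjective because $H^{\psi} = (G^{\psi})^{\psi} = G^{\psi^{2}} = G^{\psi} = H$, and injective because $\ker(\psi|_{H}) = H \cap K = 1$; being a surjective endomorphism of the subgroup $H$ — which satisfies the same chain conditions — it is an automorphism (alternatively, surjective plus the descending chain condition, or just finiteness in the intended applications, forces injectivity, but with $H \cap K = 1$ in hand injectivity is immediate). For (b), $\psi$ maps $K$ into $K$ since $K \trianglelefteq G$ is $\phi$-invariant; I claim $\psi|_{K}$ is nilpotent, i.e. $K^{\psi^{m}} = 1$ for some $m$. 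Indeed $K^{\psi} \le K \cap G^{\psi} = K \cap H = 1$, so in fact $\psi|_{K}$ is already zero — which is consistent, since $\psi = \phi^{n}$ and $K = \ker(\phi^{n})$, and one should phrase (b) as: $\phi|_{K}$ is nilpotent, because $K^{\phi^{n}} = \ker(\phi^{n})^{\phi^{n}} = 1$ by definition of $K$. The main obstacle, and the point that most needs care, is precisely the bookkeeping between $\phi$ and its power $\psi = \phi^{n}$, together with the fact that $\psi$ is merely an endomorphism (not additive), so that every manipulation like $(ab^{-1})^{\psi} = a^{\psi}(b^{\psi})^{-1}$ must be justified by multiplicativity alone; once the chain conditions are used to stabilise, the algebra is the same two-line computation as in the module case.
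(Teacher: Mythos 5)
Your proof is correct and follows essentially the same route as the paper's: stabilise the chains of kernels and images using the two chain conditions, show $H\cap K=1$ and $G=KH$ by the same two computations, and read off nilpotency on $K$ from $K^{\phi^{n}}=1$ and bijectivity on $H$ from surjectivity plus trivial intersection with the kernel. The extra care you take in transferring statements between $\phi$ and $\psi=\phi^{n}$ and in using only multiplicativity is sound but does not change the argument.
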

We give a proof that is basically the one recorded in~\cite{CarComm}
for finite groups. 
\begin{proof}
  An integer $n$ as in the statement exists because of the chain conditions.
  Set  $H  =  G^{\phi^{n}}$, $K  =
  \ker(\phi^{n})$.  We claim that $G$ is the semidirect product of $K$ by
  $H$.  In fact,  $K = \ker(\phi^{n})$ is normal in $G$.   For each $x \in
  G$, we have  $x^{\phi^{n}} \in G^{\phi^{n}} = G^{\phi^{2 n}}$.   Thus there is $y
  \in G$ such that $x^{\phi^{n}} = y^{\phi^{2 n}}$.  Then
  \begin{equation*}
    (y^{-\phi^{n}} x)^{\phi^{n}} = y^{-\phi^{2 n}} x^{\phi^{n}} = 1
  \end{equation*}
  that is, $y^{-\phi^{n}} x \in K$, so that $x = y^{\phi^{n}}
  (y^{-\phi^{n}} x)  \in H K$, and $G = H K$. 
  Now $\phi$ is surjective on $H = G^{\phi^{n}}
  = G^{\phi^{n+1}}$. If $x \in H \cap K$,
  then there is $y \in G$ such that $x = y^{\phi^{n}}$, and
  $y^{\phi^{2 n}} = x^{\phi^{n}} = 1$, so that $y 
  \in \ker(\phi^{2 n}) = \ker(\phi^{n})$,
  and $x = 1$. It follows that $H \cap K = 1$, and since $\ker(\phi)
  \le K$, also that $\phi$ is
  injective on $H$.
\end{proof}

  

Proposition~\ref{prop:abimage} yields readily
\begin{lemma}\label{lemma:Fitting}
  If the \fpf\ endomorphism $\phi$ of the finite group $G$ has a
  quasi-inverse $\psi$, 
  then  $\psi$ acts on $K = \ker(\phi^{n})$ and $H = G^{\phi^{n}}$.
\end{lemma}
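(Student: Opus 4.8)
The plan is to extract from Proposition~\ref{prop:abimage} the single fact that $\phi$ and $\psi$ commute as maps on $G$, and to show that this alone forces both $K = \ker(\phi^{n})$ and $H = G^{\phi^{n}}$ to be $\psi$-invariant. First, from $\phi\psi = \psi\phi$ an immediate induction on $k$ gives $\phi^{k}\psi = \psi\phi^{k}$ for every $k \ge 1$; in particular $\phi^{n}\psi = \psi\phi^{n}$, where $n$ is the stabilizing exponent furnished by Theorem~\ref{thm:Fitting}, so that $K$ and $H$ really are the Fitting factors.

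Next I would check the two containments directly. Take $x \in K$, so $x^{\phi^{n}} = 1$; then $(x^{\psi})^{\phi^{n}} = x^{\psi\phi^{n}} = x^{\phi^{n}\psi} = (x^{\phi^{n}})^{\psi} = 1$, so $x^{\psi} \in K$ and $K^{\psi} \subseteq K$. Similarly, take $x \in H$ and write $x = y^{\phi^{n}}$ for some $y \in G$; then $x^{\psi} = y^{\phi^{n}\psi} = y^{\psi\phi^{n}} = (y^{\psi})^{\phi^{n}} \in G^{\phi^{n}} = H$, so $H^{\psi} \subseteq H$. Together these say precisely that $\psi$ acts on $K$ and on $H$.

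There is essentially no obstacle here: the statement is a formal consequence of commutativity, which is exactly what Proposition~\ref{prop:abimage}(a) provides, so the proof is a two-line verification once that proposition is in hand. The only points requiring a modicum of care are that we are using $\psi \in \End(G)$, so that $(y^{\psi})^{\phi^{n}}$ unravels as written, and that $n$ is the Fitting exponent. For later use one could also note that $\psi|_{K}$ and $\psi|_{H}$ are again fixed-point-free, since $\psi$ is fixed-point-free on all of $G$, but that is not needed for the statement as given.
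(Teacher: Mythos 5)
Your proof is correct and follows exactly the route the paper intends: the lemma is stated as an immediate consequence of Proposition~\ref{prop:abimage}, and the relevant ingredient is indeed the commutation $\phi\psi = \psi\phi$, from which $\phi^{n}\psi = \psi\phi^{n}$ and hence the $\psi$-invariance of $\ker(\phi^{n})$ and $G^{\phi^{n}}$ follow by the two-line verification you give. Nothing is missing.
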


Theorem~\ref{thm:Fitting}~and Lemma \ref{lemma:Fitting} now yield two
recipes,
\begin{enumerate}
\item one for constructing all finite groups that
  have a quasi-invertible \fpf\ endomorphisms,
\item the other for constructing all such endomorphisms for a given finite
  group.
\end{enumerate}

The first recipe is the following.
\begin{enumerate}
\item Take an arbitrary finite group $K$ and a finite abelian group
  $H$.
\item Take a \fpf\ automorphism $\theta$ of $H$ (Section~\ref{sec:ab}).
\item Take a nilpotent endomorphism  $\eta$ of $K$ which is abelian,
  that is, such that $K' \le
  \ker(\eta)$  (Section~\ref{sec:nilp}).  
\item Construct a semidirect product $G$  of $K$ by $H$ such that $[K,
  H] \le \ker(\eta)$ and $[H^{\theta},  K^{\eta}] = 1$, and define the
  quasi-invertible   \fpf\  endomorphism   $\phi$  of   $G$   via  its
  restrictions    $\eta$    on     $K$    and    $\theta$    on    $H$
  (Theorem~\ref{thm:glue} of Section~\ref{sec:piecingtogether}).
\end{enumerate}

If we are already given a finite group $G$, this reads as follows.
\begin{enumerate}
\item Write $G$ as the semidirect
  product of a normal subgroup $K$ by an abelian subgroup $H$.
\item Take a \fpf\ automorphism $\theta$ of $H$.
\item Take a  nilpotent endomorphism $\eta$ of $K$ such  that $G' = K'
  [K, H] \le \ker(\eta)$ and $[H^{\theta}, K^{\eta}] = 1$.
\item Define the quasi-invertible \fpf\ endomorphism $\phi$ of $G$ via
  its restrictions $\eta$ on $K$ and $\theta$ on $H$.
\end{enumerate}


\section{The abelian case}
\label{sec:ab}

If $\phi$ is a \fpf\ endomorphism of the abelian group $G$, then
$\phi$ clearly is quasi-invertible, as noted
in~\cite[Remark~10]{Childs}. In fact, $1 - \phi$ is an
automorphism of $G$ here, so its inverse (with respect to map
composition) is also an automorphism of 
$G$, and $\psi = - (1 - \phi)^{-1} + 1 \in \End(G)$ is the
quasi-inverse of $\phi$

Since the Sylow $p$-subgroups of $G$ are fully invariant, we need only
consider the case of finite  abelian $p$-groups. We give a description
of the \fpf\ endomorphisms of a finite abelian $p$-group $G$, based on
the approach of~\cite{autoab} to the automorphisms of a finite abelian
$p$-group.

So let $G$ be a finite abelian $p$-group. Write $G$
as the direct product of homocyclic components
\begin{equation}\label{eq:dirsum}
  G = H_{1} \times H_{2} \times \dots \ H_{n},
\end{equation}
where each $H_{i}$ is homocyclic, of exponent $p^{e_{i}}$, with
\begin{equation*}
  0 < e_{1} < e_{2} < \dots < e_{n}.
\end{equation*}
(Clearly this decomposition is not unique in general.)
If $u \in G$, we will write $u_{i}$ for the $i$-th component of $u$
with respect to the decomposition~\ref{eq:dirsum},
that is, each $u_{i} \in H_{i}$ and  $u = u_{1} \cdot u_{2} \cdot \ldots
\cdot u_{n}$. 

Let $\theta_{i} :  H_{i} \to G$ and $\pi_{i}
: G \to  H_{i}$ be the injections and projections  with respect to the
decomposition~\eqref{eq:dirsum}. If $\alpha \in \End(G)$, write
\begin{equation*}
  \alpha_{ij} = \theta_{i} \circ \alpha \circ \pi_{j}
  :
  H_{i} \to H_{j}
\end{equation*}
for the $(i, j)$-th component of $\alpha$ and
\begin{equation*}
  \beta_{i} = \alpha_{ii} \mid_{\Omega_{1}(H_{i})}
\end{equation*}
for the restriction of $\alpha_{ii}$ to $\Omega_{1}(H_{i}) = \Set{ x
  \in H_{i} : x^{p} = 1}$. Then 
we have
\begin{equation}\label{eq:splitend}
  u^{\alpha}
  =
  \prod_{i, j=1}^{n} u_{i}^{\alpha_{ij}}
  =
  \prod_{j=1}^{n} \left( \prod_{i=1}^{n} u_{i}^{\alpha_{ij}} \right),
\end{equation}
with $\prod_{i=1}^{n} u_{i}^{\alpha_{ij}} \in H_{j}$.

Our characterization of \fpf\ endomorphisms of $G$ is the following:
\begin{prop}\label{prop:abfpf}
  In the above notation, the following are equivalent:
  \begin{enumerate}
  \item   $\alpha$ is a \fpf\ endomorphism of $G$, and
  \item each $\beta_{i}$ is a \fpf\ endomorphism of
  $\Omega_{1}(H_{i})$. 
  \end{enumerate}
\end{prop}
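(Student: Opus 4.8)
The plan is to reduce everything, via Proposition~\ref{prpo:fpfbi}, to a statement about automorphisms, then to pass to the Frattini quotient $G/\Frat(G) = G/G^{p}$, where the question becomes a triangular matrix computation over $\GF(p)$.

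First I would restate the claim. By Proposition~\ref{prpo:fpfbi}, $\alpha$ is \fpf\ if and only if $1 - \alpha$ is a bijection, i.e.\ an automorphism of $G$; likewise $\beta_{i}$ is \fpf\ if and only if $1 - \beta_{i}$ is an automorphism of the finite group $\Omega_{1}(H_{i})$ (note that $\beta_{i}$ indeed maps $\Omega_{1}(H_{i})$ into itself, since $\alpha_{ii}$ is a homomorphism). So I must show $1 - \alpha \in \Aut(G)$ exactly when $1 - \beta_{i} \in \Aut(\Omega_{1}(H_{i}))$ for every $i$. Since $G$ is a finite abelian $p$-group, $\Frat(G) = G^{p}$, and an endomorphism $\delta$ of $G$ is surjective --- equivalently, $G$ being finite, bijective --- if and only if the induced endomorphism $\bar\delta$ of the $\GF(p)$-vector space $V := G/G^{p}$ is surjective, hence bijective. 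Writing $V_{i} := H_{i}/H_{i}^{p}$, so that $V = \bigoplus_{i} V_{i}$, I am reduced to proving $1 - \bar\alpha \in \GL(V)$ if and only if $1 - \bar\alpha_{ii} \in \GL(V_{i})$ for all $i$, where $\bar\alpha_{ij}\colon V_{i} \to V_{j}$ is induced by $\alpha_{ij}$.

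Second, the matrix $(\bar\alpha_{ij})$ is triangular once the $H_{i}$ are ordered by increasing exponent. Indeed, if $e_{i} < e_{j}$ and $h \in H_{i}$, then $(h^{\alpha_{ij}})^{p^{e_{i}}} = (h^{p^{e_{i}}})^{\alpha_{ij}} = 1$, so $h^{\alpha_{ij}}$ lies in $\Set{ x \in H_{j} : x^{p^{e_{i}}} = 1}$, which, $H_{j}$ being homocyclic of exponent $p^{e_{j}}$, equals $H_{j}^{p^{e_{j}-e_{i}}} \subseteq H_{j}^{p}$; thus $\alpha_{ij}$ vanishes modulo $G^{p}$ and $\bar\alpha_{ij} = 0$. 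Hence $1 - \bar\alpha$ is block triangular with diagonal blocks $1 - \bar\alpha_{ii}$, so $\det(1 - \bar\alpha) = \prod_{i} \det(1 - \bar\alpha_{ii})$ over $\GF(p)$, and $1 - \bar\alpha$ is invertible if and only if each $1 - \bar\alpha_{ii}$ is. Finally I would identify the diagonal block with $\beta_{i}$: the map $h \mapsto h^{p^{e_{i}-1}}$ is a homomorphism $H_{i} \to H_{i}$ with kernel $H_{i}^{p}$ and image $\Omega_{1}(H_{i})$, hence induces an isomorphism $\mu_{i}\colon V_{i} \to \Omega_{1}(H_{i})$; since $\alpha_{ii}$ commutes with $p^{e_{i}-1}$-th powers, $\mu_{i}$ conjugates $\bar\alpha_{ii}$ to $\beta_{i}$, so $1 - \bar\alpha_{ii} \in \GL(V_{i})$ if and only if $1 - \beta_{i} \in \Aut(\Omega_{1}(H_{i}))$, i.e.\ (Proposition~\ref{prpo:fpfbi} once more) if and only if $\beta_{i}$ is \fpf. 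Concatenating the equivalences proves the Proposition.

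The one step that really uses the hypotheses --- homocyclic components with strictly increasing exponents $e_{1} < \dots < e_{n}$ --- is the triangularity in the third paragraph: the off-diagonal maps $\alpha_{ij}$ with $e_{i} < e_{j}$ are forced into $H_{j}^{p}$ for order reasons and so disappear on the Frattini quotient, leaving a genuinely triangular matrix whose diagonal records exactly the socle actions $\beta_{i}$. I expect this to be the main point; the passage to $G/G^{p}$ and the identification $V_{i} \cong \Omega_{1}(H_{i})$ are routine, though one should check that the natural maps involved are compatible with the $\alpha_{ii}$-action. (Note that the naive variant --- restricting $1-\alpha$ to the socle $\Omega_{1}(G) = \prod_{i}\Omega_{1}(H_{i})$ directly --- does \emph{not} work, because an off-diagonal $\alpha_{ij}$ with $e_{i}<e_{j}$ need not vanish on $\Omega_{1}(H_{i})$; it is essential to kill it modulo $G^{p}$ rather than deep inside $H_{j}$.)
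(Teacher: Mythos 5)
Your proof is correct, but it runs along a genuinely different track from the paper's. The paper stays inside the socle: it reduces to fixed points of order $p$, observes that for $i>j$ (so $\exp(H_{i})>\exp(H_{j})$) the component $\alpha_{ij}$ kills $\Omega_{1}(H_{i})$ --- its elements being $p^{e_{i}-1}$-th powers --- and so obtains the triangular system~\eqref{eq:components}, which it then solves (or obstructs) equation by equation, invoking Proposition~\ref{prpo:fpfbi} for each $\beta_{i}$ to produce the next coordinate of a fixed point. You instead pass to the Frattini quotient $G/G^{p}$, use the standard fact that an endomorphism of a finite group is surjective if and only if it is surjective modulo the Frattini subgroup, and exploit the complementary triangularity: for $i<j$ the image of $\alpha_{ij}$ lies in $H_{j}^{p}$, so the induced matrix on $G/G^{p}$ is block triangular, $\det(1-\bar\alpha)=\prod_{i}\det(1-\bar\alpha_{ii})$, and the identification $H_{i}/H_{i}^{p}\cong\Omega_{1}(H_{i})$ via $h\mapsto h^{p^{e_{i}-1}}$ (which the paper itself alludes to in the remark after the statement) converts the diagonal blocks into the $\beta_{i}$. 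All of these steps check out. The one inaccuracy is your closing parenthetical: restricting to the socle \emph{does} work --- there it is the entries $\alpha_{ij}$ with $e_{i}>e_{j}$, rather than $e_{i}<e_{j}$, that vanish, so the matrix is again triangular, merely in the opposite orientation, and that is precisely the paper's argument. Your route buys a one-line conclusion from linear algebra with no explicit recursion; the paper's buys a self-contained, elementary construction of an explicit fixed point in the converse direction, at the cost of solving the triangular system by hand.
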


Note that we can read off the action of $\beta_{i}$ also on $H_{i} /
H_{i}^{p}$.

\begin{proof}
  Clearly an endomorphism of $G$ has a nontrivial fixed point if and
  only if it has a fixed point of order $p$.
  
  Suppose first that each $\beta_{i}$ is \fpf\ on
  $\Omega_{1}(H_{i})$. Let $u \in \Omega_{1}(G)$ be a 
  fixed point of $\alpha$. If $i > j$, then $p^{e_{i}} = \exp(H_{i}) >
  \exp(H_{j}) = p^{e_{j}}$, so that
  $\Omega_{1}(H_{i}) \le \ker(\alpha_{ij})$. Therefore we have,
  according to~\eqref{eq:splitend},
  \begin{equation}\label{eq:components}
    \left\{
    \begin{matrix}
      u_{1}^{\beta_{1}} && && && &=& u_{1}\\
      u_{1}^{\alpha_{12}} &\cdot& u_{2}^{\beta_{2}} && && &=& u_{2}\\
      \dots\\
      u_{1}^{\alpha_{1n}} &\cdot& u_{2}^{\alpha_{2n}} &\cdot& \dots &\cdot& u_{n}^{\beta_{n}} &=& u_{n}\\
    \end{matrix}
    \right.
  \end{equation}
  so that we have $u_{1} = 0$ from the first identity, and then $u_{2}
  = 0$ from the second one, and so on, so that $u = 0$.

  Conversely, suppose some of the $\beta_{i}$ have nontrivial fixed
  points, and let $k$ be the largest index $i$ for which this happens,
  so that $\beta_{k+1}, \dots , \beta_{n}$ are \fpf. We want to
  construct a fixed point $u$ of $\alpha$ of order $p$. We build $u$
  as $u = u_{k} \cdot u_{k+1} \cdot \ldots \cdot u_{n}$, where the $u_{i} \in
  \Omega_{1}(H_{i})$ are determined as follows. Here~\eqref{eq:components} reads
 \begin{equation}\label{eq:components2}
    \left\{
    \begin{matrix}
      u_{k}^{\beta_{k}} && && && &=& u_{k}\\
      u_{k}^{\alpha_{k,k+1}} &\cdot& u_{k+1}^{\beta_{k+1}} && && &=& u_{k+1}\\
      \dots\\
      u_{k}^{\alpha_{kn}} &\cdot& u_{k+1}^{\alpha_{k+1,n}} &\cdot& \dots &\cdot& u_{n}^{\beta_{n}} &=& u_{n}\\
    \end{matrix}
    \right.
  \end{equation}
Choose $1 \ne u_{k} \in \Omega_{1}(H_{k})$ as a nontrivial fixed
point of $\beta_{k}$, so that the first equation is
satisfied. Now the second equation reads
\begin{equation}\label{eq:next}
  u_{k}^{\alpha_{k,k+1}} =  u_{k+1}^{1 -\beta_{k+1}}.
\end{equation}
Since $\beta_{k+1}$ is \fpf\ on $\Omega_{1}(H_{k+1})$, the function $1
-\beta_{k+1}$    is     bijective    on    $\Omega_{1}(H_{k+1})$    by
Proposition~\ref{prpo:fpfbi},  so   that  there  is   a  $u_{k+1}  \in
\Omega_{1}(H_{k+1})$ which fulfills~\eqref{eq:next}. Proceeding in the
same fashion, we find values  for $u_{k+2}, \dots, u_{n}$ that satisfy
all the equations of~\eqref{eq:components2}, so that the resulting $u$
is a nontrivial fixed point of $\phi$ of order $p$.
\end{proof}

\section{Nilpotent endomorphisms}
\label{sec:nilp}

We record a couple of immediate facts here.
\begin{prop}\label{prop:nilp}
  Let $G$ be a finite group, $\phi$ a nilpotent endomorphism of $G$.
  
  Then $\phi$ is \fpf. Moreover, the following are equivalent
  \begin{enumerate}
  \item   $\phi$ is quasi-invertible, and
  \item   $\phi$ is abelian. 
  \end{enumerate}
\end{prop}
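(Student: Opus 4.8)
The plan is the following. That a nilpotent endomorphism is \fpf\ is immediate: if $\phi^{n} = 0$ and $x^{\phi} = x$, then $x = x^{\phi^{n}} = 1$. The implication $(a) \Rightarrow (b)$ needs nothing new, since by Proposition~\ref{prop:abimage} \emph{any} \fpf\ endomorphism admitting a quasi-inverse is abelian (no nilpotency required). So the substance is the converse $(b) \Rightarrow (a)$: given an abelian nilpotent $\phi$, I would exhibit its quasi-inverse explicitly.

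First I would fix $n$ with $\phi^{n} = 0$ and, imitating the geometric series $(1-\phi)^{-1} = 1 + \phi + \dots + \phi^{n-1}$, set
\[
  \psi = -(\phi + \phi^{2} + \dots + \phi^{n-1}),
\]
so that $1 - \psi = 1 + \phi + \dots + \phi^{n-1}$. The first thing to check is that $\psi \in \End(G)$. Since $\phi$ is abelian, the subgroup $G^{\phi}$ is abelian, and each power $\phi^{k}$ with $k \ge 1$ has image contained in $G^{\phi}$; hence the images of the $\phi^{k}$ commute elementwise, and by repeated application of Proposition~\ref{prop:commute} the pointwise sum $\phi + \phi^{2} + \dots + \phi^{n-1}$ is again an endomorphism, with image in $G^{\phi}$, and therefore so is its negative $\psi$.

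It then remains to verify that $1 - \psi$ is the composition inverse of $1 - \phi$. Using left distributivity (which always holds in $N(G)$) together with the fact that each $\phi^{k}$ is an endomorphism, so that $(1-\phi)\phi^{k} = \phi^{k} - \phi^{k+1}$, one expands $(1-\phi)(1-\psi) = (1-\phi)(1 + \phi + \dots + \phi^{n-1})$ into the telescoping sum $\sum_{k=0}^{n-1}(\phi^{k} - \phi^{k+1}) = 1 - \phi^{n} = 1$; equivalently, one checks directly that $\phi + \psi = \phi\psi$, which is the form~\eqref{eq:main} of the quasi-inverse relation. Since $1 - \phi$ is a bijection by Proposition~\ref{prpo:fpfbi}, a one-sided inverse is automatically the (unique, two-sided) inverse, so $\phi$ is quasi-invertible with quasi-inverse $\psi$.

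The only point demanding care — the main obstacle, such as it is — is the bookkeeping in the near-ring: the pointwise sum is not commutative in general and $-$ reverses the order of a sum, so the cancellations and rearrangements in the telescoping (and in passing between $-\phi + \phi\psi - \psi = 0$ and $\phi + \psi = \phi\psi$) must each be justified by the fact that all the maps involved have images inside the abelian group $G^{\phi}$, i.e.\ by Proposition~\ref{prop:commute}. This is exactly where the hypothesis that $\phi$ is abelian enters. Apart from that, the argument is routine.
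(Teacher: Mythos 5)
Your proof is correct and takes essentially the same route as the paper: the same geometric-series formula $\psi = -(\phi + \phi^{2} + \dots + \phi^{n-1})$, with the abelian hypothesis entering only to certify that $\psi$ is an endomorphism. The one cosmetic difference is that the paper checks $\psi \in \End(G)$ by factoring it as $\phi$ followed by an endomorphism of the abelian group $G^{\phi}$, whereas you apply Proposition~\ref{prop:commute} repeatedly to the summands $\phi^{k}$; both are valid.
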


\begin{proof}
  If $\phi^{n} = 0$, and $x =  x^{\phi}$ for some $x \in G$, then $x =
  x^{\phi^{n}} = 1$.
  
  If $\phi$ is quasi-invertible, then $G^{\phi}$  is  abelian by
  Proposition~\ref{prop:abimage}. 


  Conversely, let $G^{\phi}$ be  abelian. The inverse (with respect to
  map  composition) of  the map  $1 -  \phi$ is  clearly $1  +  \phi +
  \phi^{2} +  \dots +  \phi^{n-1}$, so if  $\phi$ has  a quasi-inverse
  $\psi$,  then  this  is  $\psi  =  - (\phi  +  \phi^{2}  +  \dots  +
  \phi^{n-1})$.   But $\psi$  is the  composition of  $\phi$  with the
  restriction   of  $-   (1  +   \phi  +   \dots  +   \phi^{n-2})$  to
  $G^{\phi}$. Since the latter group is  abelian, $- (1 + \phi + \dots
  +    \phi^{n-2})$   is    an   endomorphism    of    $G^{\phi}$   by
  Proposition~\ref{prop:commute}, so that $\psi$ is an endomorphism of
  $G$, and thus $\psi$ is indeed the quasi-inverse of $\phi$.
\end{proof}

\section{Piecing endomorphisms together}
\label{sec:piecingtogether}

Let $G$ be the semidirect product of the normal subgroup $K$ by the
subgroup $H$. Let $\theta \in \End(H)$,  $\eta \in \End(K)$.
We define a map $\phi$ on $G$, the semidirect product
of $K$ by $H$, by letting
\begin{equation}\label{eq:piece}
  (h k)^{\phi} = h^{\theta} k^{\eta},
\end{equation}
for $h \in H$ and $k \in K$. This will be an endomorphism of $G$ is
and only if for all $h, h' \in H$ and $k, k' \in K$
\begin{equation*}
  (h' k h k')^{\phi} 
  = 
  (h' h k^{h} k')^{\phi} 
  = 
  h'^{\theta} h^{\theta} (k^{h})^{\eta} k'^{\eta}
\end{equation*}
equals
\begin{equation*}
  (h' k)^{\phi}  (h k')^{\phi}
  =
  h'^{\theta} k^{\eta} h^{\theta} k'^{\eta}.
\end{equation*}
This means
\begin{equation*}
  (k^{h})^{\eta} = (k^{\eta})^{h^{\theta}},
\end{equation*}
for all $h \in H$ and $k \in K$,
that is, $\eta : K \to K$ is a twisted morphism of $H$-modules, or
equivalently, multiplying both sides of the previous equations on 
the left by $k^{-\eta}$,
\begin{equation}\label{eq:piecingtogether}
  [k, h]^{\eta} =   [k^{\eta}, h^{\theta}]
\end{equation}
for all $h \in H$ and $k \in K$.
Now we note
\begin{lemma}\label{lemma:vanish}
If $\phi$ is an abelian \fpf\ endomorphism of the finite group $G$,
then both terms 
of~\eqref{eq:piecingtogether} vanish for all $h \in H$ and $k \in K$.
\end{lemma}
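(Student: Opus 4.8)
The plan is to unwind \eqref{eq:piece}. Since the endomorphism $\phi$ in the statement is by construction the one associated to $\theta$ and $\eta$, taking $h=1$ in \eqref{eq:piece} shows $k^{\phi}=k^{\eta}$ for every $k\in K$, and taking $k=1$ shows $h^{\phi}=h^{\theta}$ for every $h\in H$. In particular both $k^{\eta}$ and $h^{\theta}$ lie in the image $G^{\phi}$.

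Now I would bring in the hypothesis that $\phi$ is abelian: by definition $G^{\phi}$ is an abelian subgroup of $G$, equivalently $G'\le\ker(\phi)$. For the right-hand side of \eqref{eq:piecingtogether} this gives $[k^{\eta},h^{\theta}]=[k^{\phi},h^{\phi}]\in[G^{\phi},G^{\phi}]=1$, so that term vanishes. For the left-hand side, $[k,h]$ is a commutator in $G$ and $K$ is normal in $G$, so $[k,h]\in G'\cap K$; since $G'\le\ker(\phi)$ and $\phi$ agrees with $\eta$ on $K$, we get $[k,h]^{\eta}=[k,h]^{\phi}=1$. (Equivalently, once the right-hand term is known to be trivial one may simply quote the identity \eqref{eq:piecingtogether} itself, which holds because $\phi\in\End(G)$, to conclude $[k,h]^{\eta}=1$.)

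I do not expect a genuine obstacle here: the proof is essentially a two-line computation. The only point demanding a little care is the bookkeeping — remembering that $\eta$ and $\theta$ are literally the restrictions of $\phi$ to the $\phi$-invariant factors $K$ and $H$, which is precisely what lets one slide the relevant commutators either into $G^{\phi}$ (where they die by abelianness) or into $\ker(\phi)$ (where they die as elements of $G'$). Incidentally, the \fpf\ hypothesis on $\phi$ is not used in this particular argument; it is merely carried along from the surrounding discussion.
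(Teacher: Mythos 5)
Your proof is correct and follows essentially the same route as the paper: the right-hand term dies because $k^{\eta}=k^{\phi}$ and $h^{\theta}=h^{\phi}$ both lie in the abelian subgroup $G^{\phi}$, and the left-hand term dies because $[k,h]\in G'\cap K\le\ker(\phi)\cap K=\ker(\eta)$. Your side remark that the \fpf\ hypothesis is not actually needed here is accurate.
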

\begin{proof}
  The right-hand term vanishes because $G^{\phi}$ is
  abelian and contains $H^{\theta}$ and $K^{\eta}$, and the left one vanishes
  since $[K, H] \le G' \cap K \le \ker(\phi) \cap K \le 
  \ker(\eta)$, again because $G^{\phi}$ is abelian.
\end{proof}


We may state
\begin{theorem}\label{thm:glue}
  Let the finite group $G$ be  the semidirect product of the group $K$
  by the  abelian group $H$. Let  $\theta$ be a  \fpf\ endomorphism of
  $H$, and  $\eta$ be  a nilpotent endomorphism  of $K$. Define  a map
  $\phi$ on $G$ by
  \begin{equation*}
    (h k)^{\phi} = h^{\theta} k^{\eta},
  \end{equation*}
  for $h \in H$ and $k \in K$. 
  
  Then the following are equivalent
  \begin{enumerate}
  \item $\phi$ is a quasi-invertible \fpf\ endomorphism, and
  \item $G' = K' [K, H] \le \ker(\eta)$, and $[K^{\eta}, H^{\theta}] = 1$.
  \end{enumerate}
\end{theorem}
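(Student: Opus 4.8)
The plan is to prove the two implications separately, with the endomorphism criterion~\eqref{eq:piecingtogether}, Lemma~\ref{lemma:vanish}, Proposition~\ref{prop:abimage}, and the descriptions of quasi-inverses in the abelian case (Section~\ref{sec:ab}) and the nilpotent case (Proposition~\ref{prop:nilp}) as the main inputs. Two elementary facts will be used throughout: since $h^{\phi} = h^{\theta}$ for $h \in H$ and $k^{\phi} = k^{\eta}$ for $k \in K$, one has $H^{\phi} = H^{\theta}$ and $K^{\phi} = K^{\eta}$; and an equality $h^{\theta}k^{\eta} = 1$ with $h \in H$, $k \in K$ forces $h^{\theta} = k^{\eta} = 1$ because $H \cap K = 1$, so that $\ker(\phi) \cap K = \ker(\eta)$.

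For the implication (a)$\Rightarrow$(b) I would argue as follows. If $\phi$ is a quasi-invertible \fpf\ endomorphism then $G^{\phi}$ is abelian by Proposition~\ref{prop:abimage}, so in particular $\phi$ is abelian and Lemma~\ref{lemma:vanish} gives $[k,h]^{\eta} = 1$ and $[k^{\eta},h^{\theta}] = 1$ for all $h, k$; in other words $[K,H] \le \ker(\eta)$ and $[K^{\eta},H^{\theta}] = 1$. Since $K^{\eta} = K^{\phi} \le G^{\phi}$ is abelian, also $K' \le \ker(\eta)$, hence $K'[K,H] \le \ker(\eta)$; and the equality $G' = K'[K,H]$ holds in any semidirect product with abelian complement, because $G/K'[K,H]$ is then abelian. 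This is the quick direction.

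For (b)$\Rightarrow$(a) I would first observe that $\phi \in \End(G)$: in~\eqref{eq:piecingtogether} the left-hand side $[k,h]^{\eta}$ vanishes because $[K,H] \le \ker(\eta)$, and the right-hand side $[k^{\eta},h^{\theta}]$ vanishes because $[K^{\eta},H^{\theta}] = 1$. It is \fpf: from $h^{\theta}k^{\eta} = hk$ one gets $h^{-1}h^{\theta} = k(k^{\eta})^{-1} \in H \cap K = 1$, so $h$ is fixed by $\theta$ and $k$ by $\eta$, and since $\theta$ is \fpf\ and $\eta$, being nilpotent, is \fpf\ (Proposition~\ref{prop:nilp}), this forces $hk = 1$. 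Next I would build the candidate quasi-inverse: $\theta$ has a quasi-inverse $\theta^{*} \in \End(H)$ because $H$ is abelian (Section~\ref{sec:ab}), and $\eta$, being nilpotent and abelian (abelian since $K' \le \ker(\eta)$), has a quasi-inverse $\eta^{*} \in \End(K)$ (Proposition~\ref{prop:nilp}); define $\psi$ on $G$ by $(hk)^{\psi} = h^{\theta^{*}}k^{\eta^{*}}$. This $\psi$ lies in $\End(G)$ by~\eqref{eq:piecingtogether} again: $\ker(\eta) \subseteq \ker(\eta^{*})$ (since $x^{\eta} = 1$ makes $x^{\eta^{i}} = 1$ for every $i \ge 1$, and $\eta^{*}$ is built from such powers), so the left-hand side vanishes, while $K^{\eta^{*}} = K^{\eta}$ and $H^{\theta^{*}} = H^{\theta}$ by Proposition~\ref{prop:abimage}, so together with $[K^{\eta},H^{\theta}] = 1$ the right-hand side vanishes too.

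Finally I would check $\phi + \psi = \phi\psi$ as maps on $G$. Evaluating on $hk$ and using $[K^{\eta},H^{\theta^{*}}] = [K^{\eta},H^{\theta}] = 1$ to interchange the two middle factors, $(hk)^{\phi+\psi} = h^{\theta}k^{\eta}h^{\theta^{*}}k^{\eta^{*}} = h^{\theta}h^{\theta^{*}}\,k^{\eta}k^{\eta^{*}} = h^{\theta+\theta^{*}}\,k^{\eta+\eta^{*}} = h^{\theta\theta^{*}}k^{\eta\eta^{*}}$, the last step by~\eqref{eq:main} applied to the quasi-inverse pairs $(\theta,\theta^{*})$ on $H$ and $(\eta,\eta^{*})$ on $K$; and $(hk)^{\phi\psi} = (h^{\theta}k^{\eta})^{\psi} = h^{\theta\theta^{*}}k^{\eta\eta^{*}}$. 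As recalled in Section~\ref{sec:rec}, for $\psi \in \End(G)$ the relation $\phi + \psi = \phi\psi$ is equivalent to $(1-\phi)(1-\psi) = 1$, and since $\phi$ is \fpf\ the map $1-\phi$ is a bijection of the finite set $G$ (Proposition~\ref{prpo:fpfbi}), so $1-\psi$ is its two-sided inverse and $\psi$ is the quasi-inverse of $\phi$. I expect the main obstacle to be not conceptual but the bookkeeping in this last paragraph: making sure each swap of an $H$-component past a $K$-component is licensed precisely by $[K^{\eta},H^{\theta}] = 1$, that the pointwise sums $\theta+\theta^{*}$ and $\eta+\eta^{*}$ are honest endomorphisms equal to the respective products (Proposition~\ref{prop:commute}, using that $K^{\eta} = K^{\eta^{*}}$ is abelian, together with~\eqref{eq:main}), and that $\ker(\eta) \subseteq \ker(\eta^{*})$; it is exactly in these checks that the two conditions of (b) get consumed.
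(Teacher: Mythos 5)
Your proof is correct and follows essentially the same route as the paper: the forward direction via Proposition~\ref{prop:abimage} and Lemma~\ref{lemma:vanish}, and the converse by gluing the quasi-inverses $\theta^{*}$, $\eta^{*}$ supplied by Sections~\ref{sec:ab} and~\ref{sec:nilp} into an endomorphism $\psi$ of $G$ via the criterion~\eqref{eq:piecingtogether} and verifying $\phi+\psi=\phi\psi$ by the same commutation bookkeeping. You are merely more explicit than the paper about the (a)$\Rightarrow$(b) direction and the checks that $\phi$ is a \fpf\ endomorphism, which the paper treats as already settled by the discussion preceding the theorem.
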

Note  that  the  condition  $G'  \le  \ker(\eta)$  contains  both  the
condition $[K, H]  \le \ker(\eta)$, that we have  seen to be necessary
for $\phi$  to be an abelian  endomorphism, and the  condition $K' \le
\ker(\eta)$ which states that $\eta$ is an abelian endomorphism.

\begin{proof}
  We are only left with proving that $\phi$ has a quasi-inverse under the
  hypotheses on $\eta$, $\theta$, and the conditions of~(2). Now 
  by the results of Sections~\ref{sec:ab}~and \ref{sec:nilp}, the
  endomorphisms $\eta$ and $\theta$ have quasi-inverses
  $\eta'$ and 
  $\theta'$. By the conditions of (2), and
  Proposition~\ref{prop:abimage}, we have $[K, H] \le \ker(\eta) 
  = \ker(\eta')$ and $[K^{\eta'}, 
    H^{\theta'}] = [K^{\eta}, H^{\theta}] = 1$, so that the
  conditions
  for $\eta'$ and 
  $\theta'$ to induce an endomorphism $\psi$ of $G$ are
  satisfied. This will be the quasi-inverse of $\phi$, as for $h \in H$
  and $k \in K$ we have 
  \begin{equation*}
    (h k)^{-\phi + \phi \psi - \psi}
    =
    (h k)^{-\phi} (h k)^{\phi \psi}  (h k)^{-\psi}
    =
    k^{-\eta} h^{-\theta}  
    h^{\theta \theta'} k^{\eta \eta'}
    k^{-\eta'} h^{-\theta'}.
  \end{equation*}
  Now all terms commute, as $K^{\eta} = K^{\eta'}$, $H^{\theta} =
  H^{\theta'}$ are abelian and $[K^{\eta}, H^{\theta}] = 1$. So we
  have
  \begin{equation*}
    (h k)^{-\phi + \phi \psi - \psi}
    =
    h^{-\theta} h^{\theta \theta'}  h^{-\theta'} 
    k^{-\eta}  k^{\eta \eta'} k^{-\eta'}
    =
    h^{-\theta + \theta \theta' -\theta'}
    k^{-\eta  + \eta \eta' -\eta'}
    =
    1,
  \end{equation*}
  that is, $\psi$ is the quasi-inverse of $\phi$.
\end{proof}

We are now in a position to give the
\begin{proof}[Proof of Theorem~\ref{thm:recab}]
  Let $\phi$ be an abelian \fpf\ endomorphism of the group $G$. Decompose
  $G$ as in Theorem~\ref{thm:Fitting}, and let $\eta$ be the restriction
  of $\phi$ to $K$, and $\theta$ the restriction of $\phi$ to
  $H$. By Lemma~\ref{lemma:vanish}, and the fact that $\phi$ is
  abelian, conditions~(2) of 
  Theorem~\ref{thm:glue} are 
  satisfied, so that $\phi$ has a quasi-inverse.
\end{proof}

\section{Examples}
\label{sec:ex}

We review first the examples of~\cite[Section~5]{Childs} from the
point of view of this paper.

One of the examples is a Frobenius group (see \cite[4.5]{Gore},
\cite[Chap.~7]{Isaacs}) $G$ with 
Frobenius kernel $A$ and 
cyclic complement. (The kernel is taken to be abelian in the original example,
but this is immaterial.) 

So we have $A = G'$, and $(\Order{A}, \Index{G}{A}) = 1$. Let $\phi$
be a nontrivial \fpf\ endomorphism of $G$. We have $A = G' \le
\ker(\phi)$ by Proposition~\ref{prop:abimage}. Choose a Frobenius
complement $\Span{b}$ such that 
$G^{\phi} \le \Span{b}$, so that $\Span{b}$ is $\phi$-invariant. We
are left with determining the \fpf\ endomorphisms of $\Span{b}$. These
are, as noted in~\cite{Childs}, the maps $b \mapsto b^{s}$, with $(s -
1, \Size{b}) = 1$.  Proposition~\ref{prop:abfpf} yields the
equivalent condition $s \not\equiv 1 \pmod{p}$, for all primes
dividing $ \Size{b}$.

Clearly   a  group   of  order  $2$   has   no  nontrivial
\fpf\     automorphisms.      Thus      the     subgroup     $K$     of
Section~\ref{sec:Fitting} cannot have index $2$ in the group $G$.

This situation occurs in~\cite{Childs} first when $G = S_{n}$, the
symmetric group on $n \ge 3$ letters. Here $S_{n}' = A_{n} \le K$, so
we conclude as in~\cite{Childs} that $K = S_{n}$, and a nontrivial
abelian \fpf\ endomorphism  must be nilpotent, and thus map $G$ onto a
subgroup generated by 
an even involution. 

The case of dihedral groups is also described
in~\cite[Section~5]{Childs}; we review it here according to our
approach. 

Let $G$ be a dihedral group, $\phi$ be a nontrivial
abelian \fpf\ endomorphism  of $G$, and $H$ and $K$ be the
subgroups of Section~\ref{sec:Fitting}. Fix an element $x$ such that
$\Span{x}$ has index $2$ in $G$,  and  an involution $y  \notin
\Span{x}$. 

If $G$ has order twice an odd number, then $G' = \Span{x}$ has index
$2$. Since $K \ge \ker(\phi) =  G'$, by the above argument 
$\phi$ should
nilpotent, with $G^{\phi}$ a subgroup of order $2$ of $\ker(\phi) =
\Span{x}$. But the latter is a group of odd order, so there are no
abelian \fpf\ endomorphisms in this case.

If $G$ has order $4 m$, then $G' = \Span{x^{2}}$ has order $m$. We
first discuss what $K \ge G'$ can be.

If $K = G'$, then $G'$ must  have a complement $H$. This occurs if and
only  if $m$ is  odd, and  then $H$  is one  of the  Klein four-groups
$\Span{x^{m}, x^{i}y}$. We obtain  $\phi$ by extending the identity on
$K$ by  a (\fpf) automorphism  of order $3$  of $H$. This  covers
case   (5)  with   $i$   odd,  and   case  (7)   with  $i$   even
of~\cite[Section~5]{Childs}.

If $K > G'$,  then $K$ cannot have index $2$, as  noted above, so that
$K =  G$, and $\phi$ is  nilpotent.  We discuss  the possibilities for
$\ker(\phi) \ge G'$.

If $\ker(\phi) = G'$, then  $G^{\phi}$ is one of the Klein four-groups
$\Span{x^{m},  x^{i}y}$.   These  have  to   intersect  $\ker(\phi)  =
\Span{x^{2}}$  in $\Span{x^{m}}$,  and  then $m$  must  be even:  this
covers case (5) with $i$ even, and case (7) with $i$ odd.

If $\ker(\phi) > G'$, there are three possibilities for
the maximal subgroup $\ker(\phi)$ of $G$. (Here $G^{\phi}$ is a
subgroup of order $2$ of $\ker(\phi)$.)
\begin{enumerate}
\item If $\ker(\phi) = \Span{x}$, then $G^{\phi} = \Span{x^{m}}$, and
  we get case (1). 
\item If $\ker(\phi)  = \Span{x^{2}, y}$, then either  $m$ is even and
  $G^{\phi} = \Span{x^{m}}$ (case  (3)), or $G^{\phi} = \Span{x^{2 a}
  y }$ for some $a$ (case (4)).
\item If $\ker(\phi) = \Span{x^{2}, x y}$, then either $m$ is even and
  $G^{\phi} = \Span{x^{m}}$ (case (2)),  or $G^{\phi} = \Span{x^{2 a +
    1} y}$ for some $a$ (case (6)).
\end{enumerate}

We now give a couple more examples.

Consider a semidirect product $G = H K$, with $K$ normal in $G$,
and $H$ abelian. Clearly 
all \fpf\ endomorphisms of $H$ induce abelian \fpf\ endomorphisms $\phi$
of $G$ with $K \le 
\ker(\phi)$. In all the examples so far, and in the notation of
Theorem~\ref{thm:Fitting}, we have had either $K = G$ (that is, $\phi$
is nilpotent), or $\phi$
trivial on $K$. We now give an example in which $K < G$, and $\phi$
acts nontrivially on $K$.

Let $K$ be a (nonabelian) special $p$-group \cite[III.13]{Hup}, that
is, a group in which $K' = Z(K) = \Frat(K)$ is elementary
abelian. (Here $\Frat(K) = K' K^{p}$ is the Frattini subgroup of $K$.)
The endomorphisms in $\Hom(K, Z(K)) \cong \Hom(K/K', Z(K))$ are then
all nilpotent. Consider the elementary abelian group $H$ of central
automorphisms of 
$K$,
\begin{equation*}
  H = \Set{1 + f : f \in \Hom(K, Z(K))},
\end{equation*}
and let $G$ be the natural extension of $K$ by $H$. Define $\eta$ on
$K$ to be any element of $\Hom(K, Z(K))$, and $\theta$ to be any
\fpf\ endomorphism of $H$. (We might for instance regard the
elementary abelian group $H$ as the additive group of a finite field $E$,
and take $\theta$ to be the multiplication by an element of $E$ different
from $0, 1$.)

Clearly $[K, H] = Z(K) \le \ker(\eta)$, and
$K^{\eta} \le Z(K)$ commutes with $H^{\theta} \le H$. The recipe of
Section~\ref{sec:ab} is thus satisfied, so that this
defines a quasi-invertible \fpf\ endomorphism of $G$.

However, Childs shows in~\cite[Theorem 2]{Childs} that two
\fpf\ endomorphisms $\phi, \psi$ of the finite group $G$ induce the same regular
subgroup of the group of permutations on $G$ (as in the Introduction)
if and only if there is a \fpf\ endomorphim $\zeta: G \to G$, with
$G^{\zeta} \le Z(G)$, such that $1 - \phi = (1 - \zeta)(1 - \psi)$, or
$\phi = \psi - \zeta \psi + \zeta$. We say that two abelian
endomorphisms \fpf\ $\phi, \psi$ are \emph{equivalent} if they satisfy
this condition.

Now it is not difficult to see that in the last example $\phi$ is
equivalent in this sense to another abelian \fpf\ endomorphism which
acts trivially on $K$.

To see an example where this does
not happen, take $K$ to be the 
nonabelian $p$-group of order $p^{3}$ and exponent $p$, for $p$
odd. If $K = \Span{a, b}$, 
let $\eta$ be the
nilpotent endomorphism of 
$K$ defined by $a \mapsto b \mapsto 1$, so $K^{\eta} =
\Span{b}$ is abelian, but 
not central in $K$. We may then consider for instance the automorphism
$\alpha$ of
$K$ of order $p$ which acts as $a \mapsto a b$,  $b \mapsto b$, and let $H =
\Span{\alpha}$. For $\theta$ we may take any \fpf\ endomorphism of $H$,
that is, any map $\alpha \mapsto \alpha^{s}$, with $s \not\equiv 1
\pmod{p}$.
If $G = H K$, then we have $G' = \Span{b, z} \le \ker(\eta)$, and
$[K^{\eta}, H^{\theta}] 
\le [\Span{b}, H] = 1$, so that the recipe of
Section~\ref{sec:ab} is satisfied.



\begin{thebibliography}{{\noopsort{Byott2}}By96}

\bibitem[{\noopsort{Byott1}}By96a]{By96a}
N.~P. {\noopsort{Byott1}}Byott, \emph{Uniqueness of {H}opf {G}alois structure
  for separable field extensions}, Comm. Algebra \textbf{24} (1996), no.~10,
  3217--3228. \MR{1402555 (97j:16051a)}

\bibitem[{\noopsort{Byott2}}By96b]{By96b}
N.~P. {\noopsort{Byott2}}Byott, \emph{Corrigendum: ``{U}niqueness of {H}opf
  {G}alois structure for separable field extensions''}, Comm. Algebra
  \textbf{24} (1996), no.~11, 3705. \MR{1405283 (97j:16051b)}

\bibitem[Car85]{CarComm}
A.~Caranti, \emph{Finite {$p$}-groups of exponent {$p\sp 2$} in which each
  element commutes with its endomorphic images}, J. Algebra \textbf{97} (1985),
  no.~1, 1--13. \MR{812164 (87b:20029)}

\bibitem[CC07]{ChiCor}
Lindsay~N. Childs and Jesse Corradino, \emph{Cayley's theorem and {H}opf
  {G}alois structures for semidirect products of cyclic groups}, J. Algebra
  \textbf{308} (2007), no.~1, 236--251. \MR{2290920 (2007j:20026)}

\bibitem[Chi00]{Chi00}
Lindsay~N. Childs, \emph{Taming wild extensions: {H}opf algebras and local
  {G}alois module theory}, Mathematical Surveys and Monographs, vol.~80,
  American Mathematical Society, Providence, RI, 2000. \MR{1767499
  (2001e:11116)}

\bibitem[Chi07]{Chi07}
\bysame, \emph{Some {H}opf {G}alois structures arising from elementary abelian
  {$p$}-groups}, Proc. Amer. Math. Soc. \textbf{135} (2007), no.~11,
  3453--3460. \MR{2336557 (2008j:16107)}

\bibitem[Chi12]{Childs}
Lindsay~N. Childs, \emph{Fixed-point free endomorphisms and {Hopf} {Galois}
  structures}, Proc. Amer. Math. Soc. (2012).

\bibitem[Gor68]{Gore}
Daniel Gorenstein, \emph{Finite groups}, Harper \& Row Publishers, New York,
  1968. \MR{0231903 (38 \#229)}

\bibitem[GP87]{GP}
Cornelius Greither and Bodo Pareigis, \emph{Hopf {G}alois theory for separable
  field extensions}, J. Algebra \textbf{106} (1987), no.~1, 239--258.
  \MR{878476 (88i:12006)}

\bibitem[HR07]{autoab}
Christopher~J. Hillar and Darren~L. Rhea, \emph{Automorphisms of finite abelian
  groups}, Amer. Math. Monthly \textbf{114} (2007), no.~10, 917--923.
  \MR{2363058}

\bibitem[Hup67]{Hup}
B.~Huppert, \emph{Endliche {G}ruppen. {I}}, Die Grundlehren der Mathematischen
  Wissenschaften, Band 134, Springer-Verlag, Berlin, 1967. \MR{0224703 (37
  \#302)}

\bibitem[Isa76]{Isaacs}
I.~Martin Isaacs, \emph{Character theory of finite groups}, Academic Press
  [Harcourt Brace Jovanovich Publishers], New York, 1976, Pure and Applied
  Mathematics, No. 69. \MR{0460423 (57 \#417)}

\bibitem[Jac64]{SoR}
Nathan Jacobson, \emph{Structure of rings}, American Mathematical Society
  Colloquium Publications, Vol. 37. Revised edition, American Mathematical
  Society, Providence, R.I., 1964. \MR{0222106 (36 \#5158)}

\bibitem[Jac80]{BAII}
\bysame, \emph{Basic algebra. {II}}, W. H. Freeman and Co., San Francisco,
  Calif., 1980. \MR{571884 (81g:00001)}

\bibitem[Pil83]{Pilz}
G{\"u}nter Pilz, \emph{Near-rings}, second ed., North-Holland Mathematics
  Studies, vol.~23, North-Holland Publishing Co., Amsterdam, 1983, The theory
  and its applications. \MR{721171 (85h:16046)}

\end{thebibliography}
\providecommand{\noopsort}[1]{}
\providecommand{\bysame}{\leavevmode\hbox to3em{\hrulefill}\thinspace}
\providecommand{\MR}{\relax\ifhmode\unskip\space\fi MR }
\providecommand{\MRhref}[2]{%
  \href{http://www.ams.org/mathscinet-getitem?mr=#1}{#2}
}
\providecommand{\href}[2]{#2}

\end{document}